\newtheoremstyle{nonum}{}{}{\itshape}{}{\bfseries}{.}{ }{\thmnote{#3}}
\newtheorem{thm}{Theorem}[section]
\newtheorem*{thm*}{Theorem}
\newtheorem{lem}[thm]{Lemma}
\newtheorem*{definition*}{Definition}
\newtheorem*{rems*}{Remarks}
\theoremstyle{nonum}
\newcommand{\R}{\mathbb R}
\newcommand{\N}{\mathbb N}
\def\Sn{T^n}
\def\K{{\cal K}}
\newcommand{\iprod}[2]{\langle #1,#2 \rangle} 
\def\K{{\cal K}}
\def\conv{{\rm conv}}
\begin{document}
\title {Minkowski Symmetrizations of Star Shaped Sets}
\date{}
\author{D.I. Florentin, A. Segal}
\maketitle
\begin{abstract}
We provide sharp upper bounds for the number of symmetrizations required to
transform a star shaped set in $\R^n$ 
arbitrarily close (in the Hausdorff metric) to the Euclidean ball.
\end{abstract}

\section{Introduction and results}\label{Sec_Intro}
A non empty compact set $K\subset \R^n$ is called {\em star shaped}
if $x\in K$ implies $[0,x]\subseteq K$. We denote the family of star
shaped sets in $\R^n$ by $\Sn$. Recall that given a set $K$ and a
direction $u \in S^{n-1}$, it's Minkowski symmetral is defined to be
\[
M_u(K) = \frac{K + R_u K}{2},
\] 
where $R_u$ is the reflection with respect to the hyperplane
$u^\perp$.

The Minkowski symmetrization $M_u$ results in a set that is symmetric
with respect to the hyperplane $u^\perp$, thus it is natural to
expect that successive applications of this procedure in different
directions yield a sequence of sets that convergences in some sense
to the Euclidean ball. 
This is indeed known in the case where $K$ is convex.
Moreover, there are estimates regarding the convergence rate.
Bourgain, Lindenstrauss and Milman \cite{BLM} obtained the first
quantitative estimate for the convergence rate of Minkowski
symmetrizations. They found a function $n_0: (0,1)\to \N$ satisfying:
\begin{thm} [Bourgain, Lindenstrauss, Milman] \label{thm-Intro-BLM}
Let $\varepsilon \in(0,1)$ and let $n\in \N$ such that
$n_0(\varepsilon)
\le n$. If $K\in \K^n$ is a convex body with mean width $M^*(K)=M_0$,
then there exist $cn\left(C(\varepsilon) + \log n\right)$ Minkowski
symmetrizations transforming $K$ into a (convex) body $\tilde{K}$,
such that
\[
(1-\varepsilon) M_0 D_n \subset
\tilde{K} \subset
(1+\varepsilon) M_0 D_n.
\]
Here $c$ is some positive constant, and the functions
$C(\varepsilon)$, $n_0(\varepsilon)$ are of the order 
$\left(\frac{1}{\varepsilon}\right)^\frac{c}{\varepsilon^2}$.
\end{thm}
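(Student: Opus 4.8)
The plan is to pass to support functions, where Minkowski symmetrization becomes a linear averaging operator on the sphere, and then analyze its contraction. Recall that for a convex body the support function $h_K(u)=\max_{x\in K}\langle x,u\rangle$ satisfies $h_{M_u(K)}=\tfrac12\bigl(h_K+h_K\circ R_u\bigr)$, and that the mean width $M^*(K)=\int_{S^{n-1}}h_K\,d\sigma$ (with $\sigma$ the rotation-invariant probability measure) is preserved by each symmetrization, since $\sigma$ is $R_u$-invariant. Writing $g:=h_K-M_0$, a mean-zero function on $S^{n-1}$, the desired conclusion $(1-\varepsilon)M_0D_n\subset\tilde K\subset(1+\varepsilon)M_0D_n$ is exactly $\|g\|_\infty\le\varepsilon M_0$ after the prescribed number of steps. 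So the whole problem reduces to driving the mean-zero function $g$ uniformly small by repeated applications of the averaging operators $T_ug=\tfrac12(g+g\circ R_u)$.

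First I would establish $L^2$-contraction. Each $T_u$ is nonexpansive on $L^2(S^{n-1})$ and fixes the constants, so it suffices to understand the operator obtained by averaging $u$ uniformly. A direct computation gives
\[
\mathbb{E}_u\|T_ug\|_2^2=\tfrac12\|g\|_2^2+\tfrac12\langle g,Ag\rangle,\qquad Ag(x)=\mathbb{E}_u\,g(R_ux),
\]
where $A$ is self-adjoint and commutes with rotations, hence acts as a scalar $\mu_k$ on each space of degree-$k$ spherical harmonics. Expanding $g=\sum_{k\ge1}g_k$ yields $\mathbb{E}_u\|T_ug\|_2^2=\sum_{k\ge1}\tfrac{1+\mu_k}{2}\|g_k\|_2^2$. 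Evaluating $A$ on linear forms, using $\mathbb{E}_u\,uu^{\mathrm T}=\tfrac1n\Id$, gives $\mu_1=1-2/n$, and one checks the linear mode is the slowest, so $\tfrac{1+\mu_k}{2}\le 1-\tfrac1n$ for all $k\ge1$. Thus $\mathbb{E}_u\|T_ug\|_2^2\le(1-\tfrac1n)\|g\|_2^2$, and iterating over independent uniform directions gives $\mathbb{E}\,\|g_N\|_2^2\le(1-\tfrac1n)^N\|g_0\|_2^2$. Since the expectation is this small, there \emph{exists} a deterministic sequence of $N$ directions with $\|g_N\|_2\le e^{-N/2n}\|g_0\|_2$; no concentration inequality is needed here.

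The remaining, and hardest, step is to convert $L^2$-smallness into $L^\infty$-smallness. I would use that $g$ is Lipschitz on $S^{n-1}$ with constant at most the circumradius $R$ of the current body: if $\|g\|_\infty=a$ is attained at $x_0$, then $|g|\ge a/2$ on the geodesic cap of radius $\sim a/R$ about $x_0$, whose $\sigma$-measure is bounded below as soon as $a/R\gtrsim 1/\sqrt n$. Hence $\|g\|_2\le\delta$ forces $\|g\|_\infty\lesssim\max(\delta,R/\sqrt n)$. To reach $\|g\|_\infty\le\varepsilon M_0$ it therefore suffices to (i) bring the circumradius down to $R\lesssim\varepsilon M_0\sqrt n$, which tames the Lipschitz constant, and then (ii) make $\|g\|_2\lesssim\varepsilon M_0$. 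Once (i) holds one has $\|g\|_2\le\|g\|_\infty\lesssim\varepsilon M_0\sqrt n$, so by the $L^2$-contraction rate step (ii) costs only $\sim cn\log(\sqrt n)\lesssim cn\log n$ symmetrizations; assembling the phases gives the claimed count $cn\bigl(C(\varepsilon)+\log n\bigr)$.

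The crux, and the source of the parameters $C(\varepsilon)$ and $n_0(\varepsilon)$, is step (i): showing that a controlled number of symmetrizations actually contracts the circumradius to $O(\varepsilon M_0\sqrt n)$. The danger is a long thin spike, whose support-function deviation is large in $L^\infty$ but concentrated on a tiny cap, hence carries negligible $L^2$ mass and stays invisible to the $L^2$-contraction until the Lipschitz constant is controlled. Quantifying the shrinkage of such spikes requires a genuine concentration-of-measure argument on $S^{n-1}$ together with a discretization over directions, and it is precisely this ingredient that forces the threshold $n\ge n_0(\varepsilon)$ and produces the superexponential order $(1/\varepsilon)^{c/\varepsilon^2}$ for $C(\varepsilon)$ and $n_0(\varepsilon)$. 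The derandomization in every phase I would handle uniformly by the same expectation-implies-existence principle used above; tracking the constants through the cap and concentration estimates is the delicate bookkeeping I expect to be the main obstacle.
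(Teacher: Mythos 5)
First, note that the paper does not prove this statement at all: Theorem \ref{thm-Intro-BLM} is quoted from \cite{BLM} as a known result, so there is no internal proof to compare against, and your sketch must be judged on its own merits. Judged so, it contains a genuine gap, and moreover one quantitatively false step. The false step is your $L^2\to L^\infty$ conversion: you claim that a geodesic cap of radius $\rho\gtrsim 1/\sqrt n$ on $S^{n-1}$ has $\sigma$-measure bounded below, whence $\|g\|_2\le\delta$ forces $\|g\|_\infty\lesssim\max(\delta,R/\sqrt n)$. This is wrong: the measure of a cap of geodesic radius $\rho$ is of order $(c\rho)^{n-1}$ for small $\rho$, so a cap of radius $1/\sqrt n$ has measure roughly $n^{-cn}$, and a cap has constant measure only when its radius is $\pi/2-O(1/\sqrt n)$, i.e.\ nearly a hemisphere. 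The Lipschitz-cap argument therefore yields only $\|g\|_\infty\lesssim R\,(\delta/R)^{2/(n+1)}$, with an exponential loss: a spike of height $R/2$ over a cap of radius $1/2$ has $\|g\|_2\le Re^{-cn}$ but $\|g\|_\infty=R/2\gg R/\sqrt n$. To force $\|g\|_\infty\le\varepsilon M_0$ this way you would need $\|g\|_2\le M_0(c\varepsilon)^{cn}$, which through your (correct) $(1-c/n)^N$ expected $L^2$ decay costs $N\sim cn^2|\log\varepsilon|$ symmetrizations --- incompatible with the claimed count $cn\left(C(\varepsilon)+\log n\right)$. So even granting your phase (i), phase (ii) does not close.

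The second gap is that phase (i) --- contracting the oscillation $\|h_K-M_0\|_\infty$ (equivalently the circumradius, which a priori satisfies $R\le C\sqrt n\,M_0$ since $K$ contains a segment of length comparable to its diameter) --- is exactly the heart of the Bourgain--Lindenstrauss--Milman proof, and you explicitly defer it (``requires a genuine concentration-of-measure argument \dots delicate bookkeeping''). In \cite{BLM} this is done by writing the support function after $N$ symmetrizations as the $2^N$-fold average $h_{\tilde K}(x)=2^{-N}\sum_{\epsilon\in\{0,1\}^N}h_K\left(R_{u_N}^{\epsilon_N}\cdots R_{u_1}^{\epsilon_1}x\right)$, applying L\'evy's concentration inequality for the Lipschitz function $h_K$, and taking a union bound over an exponentially large net of the sphere; iterating shrinks the oscillation by a constant factor per $cn$ random steps, and it is precisely this concentration-plus-net machinery that produces $n_0(\varepsilon)$ and $C(\varepsilon)$ of order $(1/\varepsilon)^{c/\varepsilon^2}$. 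Your sketch names this obstacle but supplies no argument for it, and your proposed ``expectation-implies-existence'' derandomization is insufficient for it: it handles a single scalar quantity like $\mathbb{E}\|g_N\|_2^2$, but not the simultaneous high-probability control of sup-norm deviations over a net that the concentration step requires (there one needs probability bounds and union bounds, not expectations). The ingredients you do verify --- $h_{M_u K}=\frac12(h_K+h_K\circ R_u)$, invariance of $M^*$, the identity $\mathbb{E}_u\|T_ug\|_2^2=\frac12\|g\|_2^2+\frac12\iprod{g}{Ag}$, and $\mu_1=1-2/n$ --- are correct and are indeed part of this circle of ideas (the assertion $\mu_k\le\mu_1$ for all $k\ge1$ is stated without proof but is true), yet they constitute only the easy $L^2$ half of the theorem.
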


In \cite{K_Rate} Klartag improved Theorem \ref{thm-Intro-BLM}, and
also removed the restriction $n_0\le n$, thus providing the first
truely isometric result
in all dimensions:
\begin{thm}[Klartag] \label{thm-Klartag}
Let $n\ge 2$ and $\varepsilon\in(0,1/2)$. If $K\in \K^n$ is a convex
set with mean width $M^*(K)=M_0$, then there exist $cn |\log
(\varepsilon)|$ Minkowski symmetrizations transforming $K$ into
$\tilde{K}$, such that
\[
(1-\varepsilon)M_0 D_n
\subseteq \tilde{K} \subseteq
(1+\varepsilon)M_0 D_n,
\] where $c$ is some universal constant.
\end{thm}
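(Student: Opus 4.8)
The plan is to translate the problem into the language of support functions, where Minkowski symmetrization becomes a linear averaging operator on the sphere, and then to drive the support function uniformly close to the constant $M_0$. For a convex body $K$ write $h_K(\theta)=\max_{x\in K}\langle x,\theta\rangle$ for $\theta\in S^{n-1}$. Since $h_{A+B}=h_A+h_B$, $h_{\lambda A}=\lambda h_A$, and $h_{R_uK}(\theta)=h_K(R_u\theta)$, the symmetrization acts by
\[
h_{M_u(K)}(\theta)=\tfrac12\bigl(h_K(\theta)+h_K(R_u\theta)\bigr).
\]
The operator $T_uf=\tfrac12(f+f\circ R_u)$ is an orthogonal projection in $L^2(S^{n-1})$: it is self-adjoint because $R_u$ preserves the uniform measure, and idempotent because $R_u^2=\mathrm{Id}$, so it projects onto the subspace of $R_u$-invariant functions. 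In particular the mean $\int_{S^{n-1}}h_K\,d\sigma=M_0$ is preserved by every symmetrization, and the desired conclusion $(1-\varepsilon)M_0D_n\subseteq\tilde K\subseteq(1+\varepsilon)M_0D_n$ is exactly the assertion $\|h_{\tilde K}-M_0\|_\infty\le\varepsilon M_0$. Thus the goal becomes to compose $cn|\log\varepsilon|$ of the projections $T_u$ so as to push $h_K$ to within $\varepsilon M_0$ of its (fixed) mean in the sup norm.

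I would control the two one-sided deviations $\max_\theta h-M_0$ and $M_0-\min_\theta h$ separately, by a geometric argument on level sets. Fix $M=\max h$ and the near-maximal set $A_\delta=\{\theta:h(\theta)\ge M-\delta\}$. Because each new value $T_uh(\theta)$ is the average of $h(\theta)$ and $h(R_u\theta)$, it can stay above $M-\delta/2$ only when \emph{both} $\theta$ and $R_u\theta$ lie in $A_\delta$, i.e.\ only on $A_\delta\cap R_uA_\delta$. Hence a reflection $R_u$ that maps $A_\delta$ off itself lowers the maximum over that region. A single reflection can send a spherical cap to an antipodal band, so when $A_\delta$ is a small cap one reflection essentially halves the local excess; when $A_\delta$ is spread over the sphere one needs several reflections, but the mean constraint (the average is the fixed $M_0<M$) together with concentration of measure on $S^{n-1}$ forces $A_\delta$ to be controlled relative to the gap $M-M_0$, so that $O(n)$ well chosen reflections reduce $\max h-M_0$ by a fixed multiplicative factor. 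Applying the symmetric argument to $-h$ raises $\min h$ toward $M_0$ and reduces $M_0-\min h$ by a fixed factor using another $O(n)$ reflections.

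Given such a contraction, the iteration is routine. After each round of $O(n)$ symmetrizations both one-sided deviations shrink by a universal factor $1-c_0<1$, so after $k$ rounds $\|h-M_0\|_\infty\le(1-c_0)^k\|h_K-M_0\|_\infty$. A preliminary phase of $O(n)$ symmetrizations first brings $K$ into a bounded position $c M_0 D_n\subseteq K\subseteq C M_0 D_n$ (here $\max h/M_0$ can be huge initially, but the region where $h$ is large then has tiny measure by Markov's inequality and is cheaply displaced), after which the initial deviation is at most a universal multiple of $M_0$. Choosing $k$ of order $|\log\varepsilon|$ then gives $\|h-M_0\|_\infty\le\varepsilon M_0$, for a total of $cn|\log\varepsilon|$ symmetrizations; since $\varepsilon<1/2$ the $O(n)$ preliminary phase is absorbed into this count.

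The delicate point — and the reason this should beat Theorem \ref{thm-Intro-BLM} — is making the ``$O(n)$ reflections reduce the excess by a constant factor'' step uniform over all convex bodies, independent of $\varepsilon$ and of how the near-extremal level sets are placed. The projection structure readily yields an $L^2$ contraction on the orthogonal complement of the constants, but converting this to an $L^\infty$ contraction naively loses factors that blow up with the frequency of $h$, which is precisely the origin of the $(1/\varepsilon)^{c/\varepsilon^2}$ dependence in the earlier estimate. The crux is therefore to argue directly with the extreme level sets, exploiting that support functions of bodies in bounded position are Lipschitz with a dimension-free constant — limiting how concentrated the near-extremal sets can be — and that spherical concentration of measure forces any displacing reflection to move a definite proportion of such a set. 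Quantifying how many reflections are needed to displace a level set of given measure off itself, and showing this number is $O(n)$ with a universal per-round contraction factor, is the heart of the matter.
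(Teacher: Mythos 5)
This theorem is not proved in the paper at all: it is imported verbatim from Klartag \cite{K_Rate}, so your attempt has to stand on its own merits, and it does not --- it is an outline whose central step is explicitly left open. Your reduction to support functions is correct and standard: $h_{M_u(K)}=\tfrac12(h_K+h_K\circ R_u)$, the mean width is invariant since $R_u$ preserves $\sigma$, and for convex $\tilde K$ the two inclusions are indeed equivalent to $\|h_{\tilde K}-M_0\|_\infty\le\varepsilon M_0$. But everything quantitative hangs on the claim that $O(n)$ well-chosen reflections contract $\|h-M_0\|_\infty$ by a universal multiplicative factor, uniformly over all convex bodies and all stages of the iteration. You assert this, gesture at level sets, Lipschitz bounds and concentration, and then concede in your final sentence that quantifying it ``is the heart of the matter.'' That step \emph{is} the theorem, not a detail: the passage from the trivial $L^2$ contraction of the projections $T_u$ to a frequency-uniform $L^\infty$ contraction is exactly what separates the bound $cn|\log\varepsilon|$ from the $(1/\varepsilon)^{c/\varepsilon^2}$ dependence in Theorem \ref{thm-Intro-BLM}, and in \cite{K_Rate} it is achieved by harmonic analysis on the sphere with carefully structured (non-generic) sequences of directions, not by a level-set displacement count.

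Moreover, the displacement mechanism fails as stated precisely in the critical regime. You correctly observe that $\{T_uh\ge M-\delta/2\}\subseteq A_\delta\cap R_uA_\delta$, but your own Markov-type bound $\sigma(A_\delta)\le M_0/(M-\delta)$ becomes vacuous in the endgame: when $M-M_0$ is of order $\varepsilon M_0$ and $\delta\sim\varepsilon M_0$, the set $A_\delta$ can have measure greater than $1/2$, in which case $A_\delta\cap R_uA_\delta\neq\emptyset$ for \emph{every} $u$ (indeed for every measure-preserving map), so no reflection ``maps $A_\delta$ off itself,'' and you give no counting argument for how many reflections handle a large near-extremal set or why $O(n)$ suffice per round. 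The Lipschitz/concentration heuristic cuts the wrong way here: a body with $h=M_0(1+\eta g)$ for a fixed low-degree spherical harmonic $g$ and tiny $\eta$ has near-extremal sets of measure bounded away from $0$ and $1$ independently of $\eta$, which is why the small-excess regime must be handled spectrally rather than by displacement. Finally, your preliminary phase is unproven and, in context, circular: obtaining $cM_0D_n\subseteq K\subseteq CM_0D_n$ in $O(n)$ symmetrizations is itself a nontrivial theorem --- the paper's Lemma \ref{Lem_Give-Small-Ball} achieves inradius only $c_2M_0/\sqrt n$, and does so by invoking the very theorem you are trying to prove --- and Markov's inequality only controls where $h$ is large; it says nothing about raising $\min h$ to $cM_0$. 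In sum, the proposal frames the problem correctly and accurately locates the obstruction, but none of the three load-bearing steps (bounded position in $O(n)$ steps, the per-round $L^\infty$ contraction, and its uniformity in $\varepsilon$) is actually proved.
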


In this note we extend Klartag's theorem to $\Sn$. Namely, we show the following:

\begin{thm}\label{Thm_Mink-Isometric}
Let $n\ge 2$ and $\varepsilon\in(0,1/2)$. If $K\in \Sn$ is a star
shaped set with mean width $M^*(K)=M_0$, then there exist $Cn
|\log(\varepsilon)|$
Minkowski symmetrizations transforming $K$ into $\tilde{K}$, such that
\[
(1-\varepsilon)M_0 D_n
\subseteq \tilde{K} \subseteq
(1+\varepsilon)M_0 D_n,
\] where $C$ is some universal constant.
\end{thm}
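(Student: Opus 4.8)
The plan is to reduce to Klartag's theorem (Theorem \ref{thm-Klartag}) by passing to convex hulls, and then to upgrade the resulting convex approximation into a two-sided approximation for the star shaped set itself. Two elementary facts drive the reduction. First, convexification commutes with symmetrization: since $\conv(A+B)=\conv A+\conv B$ and $\conv(R_uA)=R_u\conv A$, we get $\conv(M_uK)=M_u(\conv K)$, and hence $\conv(M_{u_N}\cdots M_{u_1}K)=M_{u_N}\cdots M_{u_1}(\conv K)$ for any sequence of directions. Second, the mean width sees only the convex hull (it is an average of the support function), so $M^*(K)=M^*(\conv K)=M_0$, and it is preserved by each symmetrization. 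I would therefore apply Theorem \ref{thm-Klartag} to the convex body $\conv K$ to obtain directions $u_1,\dots,u_N$ with $N=cn|\log\varepsilon|$ such that, writing $\tilde K=M_{u_N}\cdots M_{u_1}K$, the convex hull $\conv\tilde K=M_{u_N}\cdots M_{u_1}(\conv K)$ satisfies $(1-\varepsilon)M_0D_n\subseteq\conv\tilde K\subseteq(1+\varepsilon)M_0D_n$. The outer bound for $\tilde K$ is then immediate, since $\tilde K\subseteq\conv\tilde K\subseteq(1+\varepsilon)M_0D_n$.

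The entire difficulty is therefore the inner bound $(1-\varepsilon)M_0D_n\subseteq\tilde K$. Here I would use the explicit description of the iterated symmetral as a Minkowski average of isometric copies: unfolding the definition gives $\tilde K=\frac{1}{2^N}\sum_{j=1}^{2^N}T_jK$, where each $T_j$ is a composition of reflections $R_{u_i}$ and hence an isometry fixing the origin. Because $K$ (and thus each $T_jK$) is star shaped with $0\in K$, for any selection of points $p_j\in T_jK$ the whole zonotope $\frac{1}{2^N}\sum_j[0,p_j]$ lies in $\tilde K$. Fixing a target direction $\theta\in S^{n-1}$ and choosing $p_j\in T_jK$ to maximize $\langle p_j,\theta\rangle$, one has $\frac{1}{2^N}\sum_j\langle p_j,\theta\rangle=h_{\conv\tilde K}(\theta)\ge(1-\varepsilon)M_0$, so $\tilde K$ contains points whose $\theta$-component is as large as required. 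The plan is to promote this into the statement that the on-axis point $(1-\varepsilon')M_0\theta$ itself lies in $\tilde K$, by averaging over sufficiently many copies so that the perpendicular components of the chosen points cancel.

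The main obstacle is precisely this last step: Hausdorff-closeness of $\tilde K$ to its convex hull is by itself \emph{not} enough to force an inscribed ball, since a star shaped set can be Hausdorff-close to its hull while containing no ball at all (a dense union of spikes approximates its convex hull yet has empty interior). Thus I cannot simply invoke a Shapley--Folkman--Starr Hausdorff estimate; I must genuinely show that the averaging \emph{fills} the hull. I would exploit two features that are available here: the number of copies $2^N$ is enormous compared with the ambient dimension, and Klartag's directions equidistribute the isometries $T_j$, so that for each $\theta$ the perpendicular parts $p_j-\langle p_j,\theta\rangle\theta$ average down while the parallel parts stay large. Quantitatively I expect to bound the inradius of the zonotope $\frac{1}{2^N}\sum_j[0,p_j]$ from below by comparing $\frac{1}{2^N}\sum_j\langle p_j,\theta\rangle$ against the fluctuation of $\frac{1}{2^N}\sum_j(p_j-\langle p_j,\theta\rangle\theta)$, using that the circumradius of $K$ is controlled by its mean width (via Jung's inequality applied to $\conv K$, giving $\mathrm{rad}(K)\le C\sqrt n\,M_0$) to keep all error terms on the scale of $M_0$. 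Carrying out this inner filling estimate --- converting equidistribution of the $T_j$ into cancellation of the perpendicular components uniformly in $\theta$ --- is the crux of the argument; once it yields $\rho_{\tilde K}(\theta)\ge(1-\varepsilon)M_0$ for every $\theta$, the inner inclusion and hence the theorem follow.
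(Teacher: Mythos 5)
Your reduction via $\conv(M_u K)=M_u(\conv K)$ and the outer inclusion are exactly right and coincide with the paper's starting point, but the proposal has a genuine gap at precisely the step you flag as the crux, and the gap is not fillable by the tools you allow yourself. Theorem \ref{thm-Klartag}, used as a black box, asserts only the existence of directions that sandwich the convex hull; it provides no ``equidistribution'' of the isometries $T_j$ whatsoever, so there is no mechanism forcing the perpendicular components of your maximizers $p_j$ to cancel. The $p_j$ are chosen adversarially by $K$ and may all carry a common perpendicular drift of size up to $\mathrm{rad}(K)\sim\sqrt{n}\,M_0$ (your own Jung bound), while the inner inclusion needs a point of $\tilde K$ on the ray $\R_+\theta$ itself; note also that your zonotope endpoint $\frac{1}{2^N}\sum_j p_j$ lies in $\tilde K$ trivially, so the zonotope adds nothing beyond the unproved cancellation. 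Your spike example shows the conclusion is not formal --- indeed it is not even clear that the \emph{same} $cn|\log\varepsilon|$ symmetrizations that round the hull produce any inscribed ball in $\tilde K$ at all --- and proving the cancellation from scratch would amount to reproving a symmetrization convergence theorem rather than invoking Theorem \ref{thm-Klartag}.

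The missing idea, which is the paper's route, is to spend \emph{additional} symmetrizations after rounding the hull, exploiting that once $rD_n\subseteq K$ one also has $rD_n\subseteq R_uK$, hence $M_uK\supseteq\frac{K+rD_n}{2}$ for every $u$: symmetrization lets you Minkowski-average $K$ against its own inscribed ball. Concretely, the paper first secures $\frac{c_2}{\sqrt n}D_n\subseteq K_1$ by applying Klartag's theorem to a segment $[0,Ru]\subseteq K$ (Lemma \ref{Lem_Give-Small-Ball}); then from the hull sandwich it deduces by an elementary support-hyperplane argument (Lemma \ref{Lem_Conv=Ball-to-eps-net}) that $K$ contains a $2\sqrt{\varepsilon}$-net of $(1-\varepsilon)S^{n-1}$, i.e.\ $(1-\varepsilon)D_n\subseteq K+2\sqrt{\varepsilon}D_n$; finally, in Lemma \ref{lem-StarShapedContainsBall}, each further symmetrization multiplies the inradius by $\frac{1-\varepsilon}{4\sqrt{\varepsilon}}\ge 6/5$ while it is below $2\sqrt{\varepsilon}$ (here star-shapedness enters, via $\frac{r}{2\sqrt{\varepsilon}}K\subseteq K$), and afterwards halves the gap to $1-2\sqrt{\varepsilon}-\varepsilon$, reaching inradius $1-4\sqrt{\varepsilon}$ after only $O(|\log\varepsilon|+\log n)$ extra steps, keeping the total at $Cn|\log\varepsilon|$. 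This deterministic inflation mechanism needs no information about how Klartag's directions act on the non-convex parts of $K$, and it is exactly the input your plan lacks.
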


The proof of Theorem \ref{Thm_Mink-Isometric} consists of three steps.
First, we make sure that the body at hand contains a small ball around
the origin (Lemma \ref{Lem_Give-Small-Ball}
). Next, we consider Minkowski symmetrizations {\em of the convex
	hull} of our star shaped body. As mentioned above, for convex bodies
Klartag showed how many steps are required in order to bring a body
isometrically close to the Euclidean ball. We apply these
symmetrizations, and get a body whose convex hull lies between two
balls of very similar radii. This can only happen if the body
contains some ``$\varepsilon$-net'' of the inner ball's sphere (Lemma
\ref{Lem_Conv=Ball-to-eps-net}). In the third and final step, we use
this fact to increase the radius of the small ball, which was
obtained in the first step.

\noindent {\bf Notations:}
The {\em support function} of a (not necessarily convex) body $K$ is
defined by $h_K(u) = \sup \{ \iprod{x}{u} \ | \  x \in  K \}$. The
width, or {\em mean width}, of a star shaped set $K$ is defined to be
$M^*(K)= \int_{S^{n-1}} h_K d\sigma$, where $\sigma$ is the
normalized Haar measure on the sphere.

\section{Proof of the Theorem 
}\label{Sec_Mink}

Our first step is to generate (using Minkowski symmetrizations) a
small ball inside a (non trivial) star shaped set.

\begin{lem}\label{Lem_Give-Small-Ball}
Let $n\ge 2$, $K_0\in \Sn$, and let $M_0=M^*(K_0)>0$. Then there exist
$c_1n$ Minkowski symmetrizations transforming $K_0$ into $K_1$, such that
\[
\frac{c_2}{\sqrt{n}} D_n
\subseteq \frac{K_1}{M_0}
,\]
where $c_1,c_2$ are some universal constants (in fact $c_1=c$ of
Theorem \ref{thm-Klartag}).
\end{lem}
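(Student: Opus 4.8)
The plan is to produce the inscribed ball not by controlling the whole convex hull of $K_0$, but by symmetrizing a single segment contained in $K_0$. The conceptual point is this: applying Theorem~\ref{thm-Klartag} to $\conv(K_0)$ would only control $\conv(K_1)$, which can be far larger than the (possibly spiky) star shaped set $K_1$ itself, so convex-hull information alone need not produce any inscribed ball. A segment, by contrast, is convex, so its symmetrization is governed by Klartag's theorem, and (by monotonicity) its symmetral is automatically a convex subset of $K_1$. This is exactly what forces the honest $1/\sqrt n$ loss.

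First I would fix a point $x_0\in K_0$ realizing the outer radius, $|x_0|=R=\max_{u\in S^{n-1}}h_{K_0}(u)$. Since $R\ge \int_{S^{n-1}}h_{K_0}\,d\sigma = M_0>0$, this $x_0$ is nonzero, and by star-shapedness the segment $[0,x_0]$ lies in $K_0$. A direct computation gives its mean width $M^*([0,x_0])=\tfrac{R}{2}a_n$, where $a_n=\int_{S^{n-1}}|\iprod{e}{u}|\,d\sigma(u)$ for any unit vector $e$; the standard estimate $a_n\ge c_0/\sqrt n$ (valid for all $n\ge 2$) will supply the eventual $1/\sqrt n$ factor.

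Next I would apply Theorem~\ref{thm-Klartag} to the convex set $[0,x_0]$ with $\varepsilon=\tfrac12$: this yields $N\le cn$ directions $u_1,\dots,u_N$ for which the symmetral $Z:=M_{u_N}\cdots M_{u_1}([0,x_0])$ satisfies $\tfrac12\cdot\tfrac{R}{2}a_n\,D_n\subseteq Z$. I would then apply the \emph{same} directions to $K_0$ and set $K_1:=M_{u_N}\cdots M_{u_1}(K_0)$. The argument closes via one elementary property: Minkowski symmetrization is monotone under inclusion, i.e. $A\subseteq B$ implies $M_uA\subseteq M_uB$ (since Minkowski addition is monotone), and hence so is any iterate. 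Thus $[0,x_0]\subseteq K_0$ gives $Z\subseteq K_1$, so $\tfrac{R a_n}{4}\,D_n\subseteq K_1$. Combining $R\ge M_0$ with $a_n\ge c_0/\sqrt n$ yields $\tfrac{c_2}{\sqrt n}M_0\,D_n\subseteq K_1$ with $c_2=c_0/4$, while $N\le cn=c_1 n$, matching the asserted constant $c_1=c$ of Theorem~\ref{thm-Klartag}.

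The main obstacle is a technical one: Theorem~\ref{thm-Klartag} is phrased for convex bodies, whereas $[0,x_0]$ is a lower-dimensional (degenerate) convex set. I expect to resolve this by observing that the symmetrization process and Klartag's estimate operate entirely at the level of support functions on $S^{n-1}$, and $h_{[0,x_0]}(u)=\iprod{x_0}{u}_+$ is a perfectly good continuous function; alternatively, to stay literally within the stated hypotheses, one applies the theorem to a slightly thickened body and passes to the limit. The only other ingredient, the lower bound $a_n\ge c_0/\sqrt n$, is standard. The genuine idea, as noted above, is to replace the hard-to-control set $K_1$ by the convex subset $Z$ coming from a single inscribed segment.
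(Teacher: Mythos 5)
Your proposal is correct and follows essentially the same route as the paper: both proofs inscribe the segment from the origin to a farthest point of $K_0$, apply Theorem~\ref{thm-Klartag} to that segment with a fixed $\varepsilon$ (you use $1/2$, the paper $1/e$), and use monotonicity of Minkowski symmetrization together with $M^*([0,u])\approx 1/\sqrt{2\pi n}$ and $R\ge M_0$ to conclude. Your remark on handling the degeneracy of the segment (via support functions or thickening) addresses a point the paper passes over silently, but it changes nothing substantive.
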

\begin{proof}
Let $R\ge M_0>0$ be the minimal radius of a centered ball enclosing
$K_0$. Then there exists some $u\in S^{n-1}$ such that $I_0=[0,Ru]
\subseteq K_0$. Let $\varepsilon = 1/e$. By Theorem \ref{thm-Klartag}
applied to $I_0$, there exist $N_1 = cn$ 
Minkowski symmetrizations
$M_{u_1}\dots M_{u_{N_1}}$ which transform the interval $I_0$ into a
convex body $I_1$ satisfying
\[
(1-1/e) M^*(I_0) D_n \subseteq I_1.
\]
Then
\[
(1-1/e) M^*([0,u]) D_n
\subseteq
(1-1/e) \left(\frac{RM^*([0,u])}{M_0}\right) D_n
\subseteq
\frac{I_1}{M_0}
\subseteq
\frac{K_1}{M_0},
\]
where the body $K_1$ is defined by:
\begin{equation}\label{Eq_Mink-1st-Step}
K_1:=M_{u_{N_1}}\dots M_{u_1} K_0.
\end{equation}
Since $M^*([0,u])=
\frac{1}{2}\int_{S^{n-1}}|x_1|d\sigma(x)\approx
\frac{1}{\sqrt{2\pi n}}$, the proof is complete.
\end{proof}

The inner radius $c_2/\sqrt{n}$ does not decrease under additional
symmetrizations (it may increase). Next consider Minkowski
symmetrizations which bring {\em the convex hull} of $K_1$
isometrically close to the ball. By Theorem \ref{thm-Klartag} there
exist $N_2= c n |\log(\varepsilon)|$
directions $u_1,\dots,u_{N_2}$
such that
\[
(1 - \varepsilon) D_n
\subseteq
M_{u_{N_2}}\dots M_{u_1} \conv \left( \frac{K_1}{M_0} \right)
\subseteq
(1 + \varepsilon) D_n.
\]
We define the body $K_2$ to be:
\begin{equation}\label{Eq_Mink-2nd-Step}
K_2:=M_{u_{N_2}}\dots M_{u_1} K_1.
\end{equation}

Note that Minkowski symmetrizations commute with the convex hull
operation, i.e. $M_u\conv K = \conv M_u K$, simply because in general
$\conv (A+B) = \conv A + \conv B$. Thus $\frac{K_2}{M_0}$ is a star
shaped body whose convex hull is isometrically close to the ball. We
use the following standard lemma to show that such a body must
contain some $\delta$-net of the sphere. More precisely:
\begin{lem}\label{Lem_Conv=Ball-to-eps-net}
Let $n\ge 2$, $\varepsilon\in(0,1)$, and let $K\in \Sn$ be such that
\[(1 - \varepsilon) D_n
  \subseteq  \conv K  \subseteq
  (1 + \varepsilon) D_n.
\]
Then $K$ contains a $2\sqrt{\varepsilon}$-net of the sphere
$(1-\varepsilon) S^{n-1}$, that is
\begin{equation}\label{Eq_Delta-net}
(1-\varepsilon) S^{n-1}
\subseteq
K + 2\sqrt{\varepsilon}D_n.
\end{equation}
\end{lem}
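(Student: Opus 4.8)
The plan is to produce, for each point $p$ on the sphere $(1-\varepsilon)S^{n-1}$, a single point of $K$ lying within distance $2\sqrt{\varepsilon}$ of $p$; quantifying over all such $p$ is exactly the net condition \eqref{Eq_Delta-net}. The mechanism is the support function. Since the supremum of a linear functional over a set agrees with its supremum over the convex hull, we have $h_K = h_{\conv K}$, so the sandwiching hypothesis translates into the two-sided bound $1-\varepsilon \le h_K(u) \le 1+\varepsilon$ for every $u \in S^{n-1}$.

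Next I would fix $p \in (1-\varepsilon)S^{n-1}$ and set $u = p/|p| \in S^{n-1}$. Because $K$ is compact, the supremum defining $h_K(u)$ is attained at some $x \in K$, and then $\iprod{x}{u} = h_K(u) \ge 1-\varepsilon$. On the other hand $K \subseteq \conv K \subseteq (1+\varepsilon)D_n$ gives $|x| \le 1+\varepsilon$. These are the only two facts about $x$ the argument uses.

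It then remains to estimate $|x-p|$. Writing $p = (1-\varepsilon)u$ and expanding,
\[
|x-p|^2 = |x|^2 - 2(1-\varepsilon)\iprod{x}{u} + (1-\varepsilon)^2 \le (1+\varepsilon)^2 - (1-\varepsilon)^2 = 4\varepsilon,
\]
where the inequality uses $|x|^2 \le (1+\varepsilon)^2$ together with $-2(1-\varepsilon)\iprod{x}{u} \le -2(1-\varepsilon)^2$. Hence $|x-p| \le 2\sqrt{\varepsilon}$, that is $p \in x + 2\sqrt{\varepsilon}D_n \subseteq K + 2\sqrt{\varepsilon}D_n$. As $p$ ranges over $(1-\varepsilon)S^{n-1}$, the inclusion \eqref{Eq_Delta-net} follows.

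I do not expect a genuine obstacle: the entire content is in selecting the extremal point $x$ of $K$ in the direction $u = p/|p|$, after which one Pythagoras-type line closes the argument. The only subtleties deserving a word of care are that $h_K$ and $h_{\conv K}$ coincide, so the lower bound $\ge 1-\varepsilon$ transfers from $\conv K$ to $K$, and that compactness of $K$ is what guarantees the maximizer $x$ lies in $K$ itself rather than merely in $\conv K$.
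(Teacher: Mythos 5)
Your proof is correct and is essentially the paper's argument: the paper also exploits $h_K = h_{\conv K}$ by finding a point of $K$ in the halfspace $H^+$ beyond the hyperplane supporting $(1-\varepsilon)D_n$ at $p$, and then observes that any such point within $(1+\varepsilon)D_n$ lies in $p + 2\sqrt{\varepsilon}D_n$ --- exactly the cap-containment your expansion $|x-p|^2 \le (1+\varepsilon)^2 - (1-\varepsilon)^2 = 4\varepsilon$ verifies. If anything, your version is slightly more complete, since the paper delegates that distance estimate to a figure while you carry out the one-line computation explicitly.
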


\begin{proof}
Let $x\in (1-\varepsilon) S^{n-1}$. We claim that the intersection
$ (x+2\sqrt{\varepsilon}D_n)\cap K $ is not empty. Denoting the
hyperplane supporting $(1-\varepsilon) D_n$ at the point $x$ by $H$
and the halfspace with boundary $H$ by $H^+= \{y\,|\, 
\iprod{x}{x} \le \iprod{x}{y} \}$, we will show that $H^+\cap K \cap
(x+2\sqrt{\varepsilon}D_n)$ is not empty. Indeed,
\[
H^+ \cap K
\subseteq
H^+ \cap (1+\varepsilon) S^{n-1}
\subseteq
(x + 2\sqrt{\varepsilon}D_n),
\]
see Figure \ref{Fig_Instead-of-Proving}. The set $H^+ \cap K$ is not
empty (since $x\in (H^+ \cap\conv K)$), and thus the proof is complete.
\begin{figure}[h]
	\centering
	\includegraphics[width=0.35\textwidth]{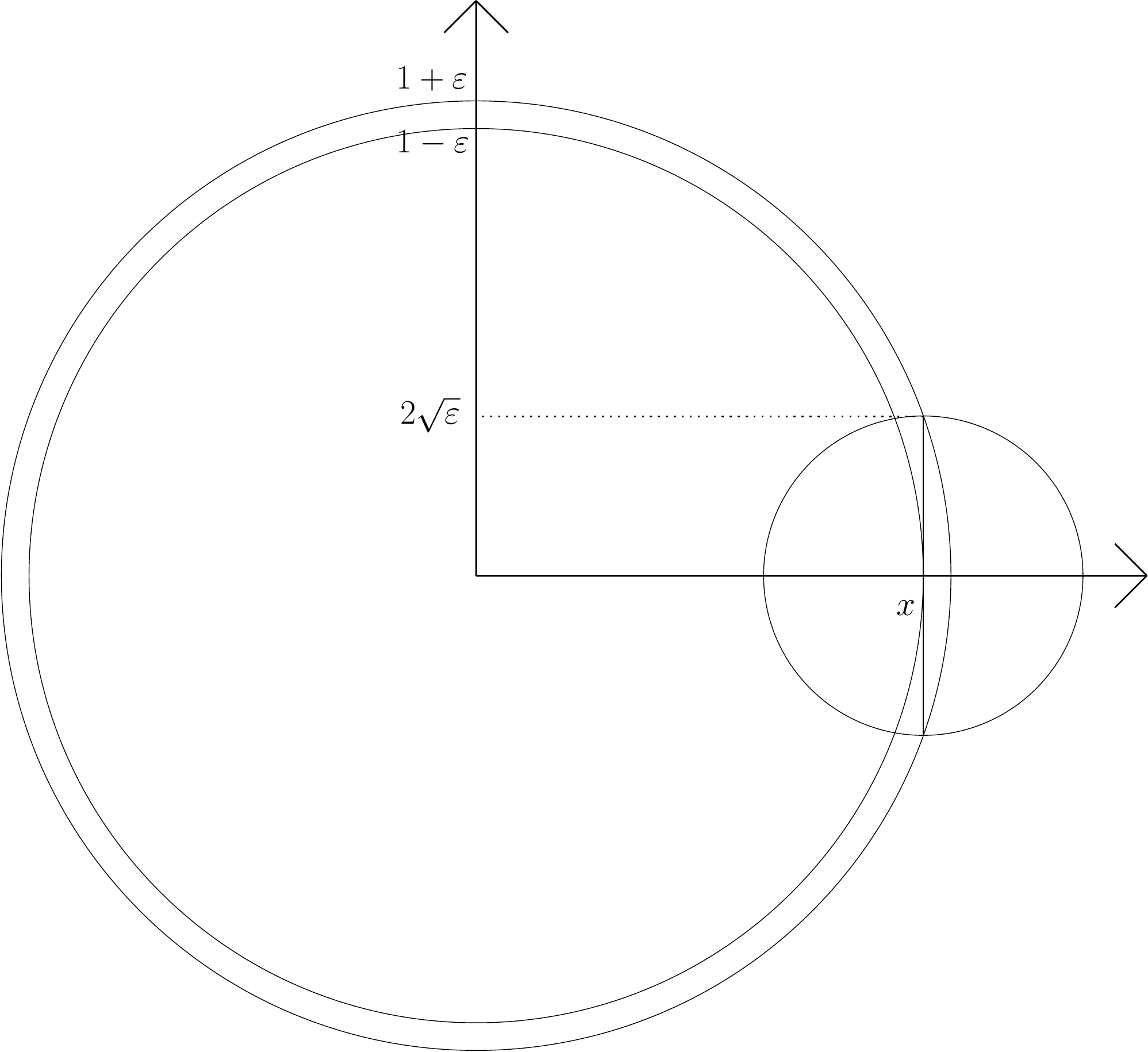}
	\caption{The small ball around $x$ contains all points of $K$ in $H^+$.}
	\label{Fig_Instead-of-Proving}
\end{figure}
\end{proof}

Note that in fact, since $K + 2\sqrt{\varepsilon}D_n$ is star shaped, we have:
\begin{equation}
(1-\varepsilon) D_n
\subseteq
K + 2\sqrt{\varepsilon}D_n.
\end{equation}
The outer radius $1+\varepsilon$ does not increase under additional
symmetrizations (it may decrease). As for the inner radius, we begin
with the small ball obtained in Lemma \ref{Lem_Give-Small-Ball}, and
use \eqref{Eq_Delta-net} to increase it geometrically. More precisely:

\begin{lem} \label{lem-StarShapedContainsBall}
Let $n\ge 2$, $K\in \Sn$, $r\in(0,1)$, and $\varepsilon\in
(0,\varepsilon_0)$, where $\varepsilon_0=1/25$. Assume that:
\begin{itemize}
\item $r D_n \subseteq K$.
\item $(1-\varepsilon) D_n
\subseteq
K + 2\sqrt{\varepsilon}D_n$.
\end{itemize}
Then there exist $N=\alpha + \beta|\log \varepsilon| + \gamma|\log r|$ Minkowski symmetrizations transforming $K$
into $\tilde{K}$ satisfying
\[
(1-4\sqrt{\varepsilon}) D_n \subseteq \tilde{K},
\]
where $\alpha, \beta, \gamma$ are positive constants.
\end{lem}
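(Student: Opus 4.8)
The plan is to prove the lemma by iterating a single \emph{growth step} that strictly enlarges a centered inscribed ball, while keeping three invariants along the sequence $K_0=K,K_1,K_2,\dots$: (i) a centered ball $r_kD_n\subseteq K_k$ whose radius is nondecreasing; (ii) the net condition $(1-\varepsilon)D_n\subseteq K_k+2\sqrt\varepsilon D_n$; and (iii) the two--sided bound $\conv K_k\subseteq(1+\varepsilon)D_n$ present in the setting where the lemma is applied. Invariant (i) is automatic, since $M_u(rD_n)=rD_n$ and $M_u$ is inclusion--monotone. Invariants (ii)--(iii) are preserved by \emph{any} symmetrization: since $R_uD_n=D_n$ one has $M_uK+2\sqrt\varepsilon D_n=M_u(K+2\sqrt\varepsilon D_n)$, and applying the monotone map $M_u$ (which commutes with $\conv$) to the containments, together with $M_u((1-\varepsilon)D_n)=(1-\varepsilon)D_n$, gives both. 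From (ii)--(iii) the supporting--hyperplane argument in the proof of Lemma~\ref{Lem_Conv=Ball-to-eps-net} yields, in \emph{every} direction, a point of $K_k$ of norm at least $1-\varepsilon$ (the far--side point in $H^+$); being star shaped, $K_k$ then contains the whole segment from the origin to each such point.

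The heart of the matter is the growth step. Fix any direction $u$. I claim $M_uK_k\supseteq sD_n$ as soon as every point of the sphere $2s\,S^{n-1}$ lies within distance $r_k$ of $K_k$. Indeed, to place a target $s\theta$ into $M_uK_k=\tfrac12(K_k+R_uK_k)$ it suffices to write $s\theta=\tfrac12(a+R_uc)$ with $a=2s\theta-R_uc\in r_kD_n\subseteq K_k$ and $c\in K_k$; since $|a|=|R_u(2s\theta)-c|$, this works exactly when the radius--$2s$ point $R_u(2s\theta)$ has a point of $K_k$ within $r_k$, and as $\theta$ runs over $S^{n-1}$ these exhaust all of $2s\,S^{n-1}$, independently of $u$. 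The solid ball $r_kD_n$ supplies a genuinely $n$--dimensional tolerance, so no dimension--dependent covering is incurred. The productive observation is that, although the net on $S^{n-1}$ is only $2\sqrt\varepsilon$--fine, \emph{pulling it back along the segments} makes it far finer at small radii: a point at radius $\rho\le 1-\varepsilon$ has perpendicular distance at most $\rho\cdot\frac{2\sqrt\varepsilon}{1-\varepsilon}$ to the segment toward a nearest norm--$\ge(1-\varepsilon)$ point of $K_k$. Hence the covering radius of $K_k$ at radius $2s$ is at most $\frac{4\sqrt\varepsilon}{1-\varepsilon}\,s$, and the condition $\frac{4\sqrt\varepsilon}{1-\varepsilon}\,s\le r_k$ lets one take
\[
s=\lambda\, r_k,\qquad \lambda=\frac{1-\varepsilon}{4\sqrt\varepsilon}\ \ge\ \frac{1-\varepsilon_0}{4\sqrt{\varepsilon_0}}=\frac65>1 .
\]
Thus one symmetrization multiplies the inscribed radius by a fixed factor $>1$.

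I would run the iteration in two regimes. While $r_k\le 2\sqrt\varepsilon$ the segment--refined estimate is binding and each step multiplies $r_k$ by $\lambda\ge 6/5$; starting from $r$, the level $2\sqrt\varepsilon$ is reached after at most $\log_\lambda(1/r)\le\gamma|\log r|$ steps. Once $r_k\ge 2\sqrt\varepsilon$ the target radius $2s$ exceeds $1-\varepsilon$, so the binding estimate is the crude shell bound: the sphere $2s\,S^{n-1}$ lies within $2s-(1-\varepsilon)+2\sqrt\varepsilon$ of the norm--$\ge(1-\varepsilon)$ points of $K_k$, yielding the recursion $r_{k+1}=\tfrac12\big((1-\varepsilon-2\sqrt\varepsilon)+r_k\big)$. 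This halves the gap to the fixed point $1-\varepsilon-2\sqrt\varepsilon$ at each step, so after at most $\beta|\log\varepsilon|$ further symmetrizations we obtain $r_k\ge 1-4\sqrt\varepsilon$. Adding the two phases and a constant $\alpha$ absorbing roundings and the crossover step gives $N=\alpha+\beta|\log\varepsilon|+\gamma|\log r|$.

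The main obstacle is the growth step, and two points there deserve care. First, the factor $\lambda>1$ is genuinely delicate: the bare net only guarantees points of norm $1-\varepsilon-2\sqrt\varepsilon$, for which $\lambda<1$ at $\varepsilon=1/25$; it is essential to upgrade these to norm $\ge1-\varepsilon$ via the far--side ($H^+$) argument, and this is precisely what pins the threshold $\varepsilon_0=1/25$ through $\tfrac{1-\varepsilon_0}{4\sqrt{\varepsilon_0}}=\tfrac65$. Second, one must verify that the two regimes glue with enough slack, i.e.\ that the shell fixed point $1-\varepsilon-2\sqrt\varepsilon$ clears the target $1-4\sqrt\varepsilon$ (it does, since $2\sqrt\varepsilon\ge\varepsilon$), and that invariants (i)--(iii) are maintained throughout, so that the norm--$\ge(1-\varepsilon)$ points demanded at every step are indeed available.
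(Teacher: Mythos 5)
Your two--phase scheme, the growth--step mechanism (writing $s\theta=\tfrac12(a+R_uc)$ with $a\in r D_n$), the shell recursion $r_{k+1}=\tfrac12\bigl((1-\varepsilon-2\sqrt{\varepsilon})+r_k\bigr)$, and both step counts coincide with the paper's proof. However, there is a genuine gap in your phase one: the growth factor $\lambda=\frac{1-\varepsilon}{4\sqrt{\varepsilon}}$ is derived only after upgrading net points to norm $\ge 1-\varepsilon$ via the $H^+$ argument of Lemma \ref{Lem_Conv=Ball-to-eps-net}, and that argument requires $(1-\varepsilon)D_n\subseteq\conv K_k\subseteq(1+\varepsilon)D_n$ --- hypotheses that are \emph{not} in the statement of Lemma \ref{lem-StarShapedContainsBall}. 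You import them as ``invariant (iii) \dots present in the setting where the lemma is applied,'' so what you prove is a weaker lemma with an extra assumption. (It would still suffice for Theorem \ref{Thm_Mink-Isometric}, since there the hull bounds hold and are preserved because $M_u\conv=\conv M_u$; but it is not a proof of the lemma as stated. Note also that your invariant (iii) records only the upper hull bound, while the nonemptiness of $H^+\cap K_k$ needs the lower bound $x\in\conv K_k$ as well.)

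Moreover, your diagnosis that the upgrade is ``essential'' is incorrect, and seeing why also closes the gap. Instead of pulling the net back along segments at the \emph{fixed} tolerance $2\sqrt{\varepsilon}$ (which indeed breaks down when the target radius $2s$ exceeds the norm $1-\varepsilon-2\sqrt{\varepsilon}$ of a bare net point), scale the entire net condition: for $t=\frac{r}{2\sqrt{\varepsilon}}<1$ star--shapedness gives $tK\subseteq K$, so
\[
\frac{r(1-\varepsilon)}{2\sqrt{\varepsilon}}\,D_n
\;=\; t(1-\varepsilon)D_n
\;\subseteq\; tK + 2t\sqrt{\varepsilon}\,D_n
\;\subseteq\; K + rD_n
\;\subseteq\; K + R_u K,
\]
i.e.\ the tolerance shrinks proportionally with the radius, and one symmetrization yields $\frac{1-\varepsilon}{4\sqrt{\varepsilon}}\,r\,D_n\subseteq M_u K$ --- exactly your $\lambda$, obtained from the two stated hypotheses alone. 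This is precisely the paper's Case a; in your formalism it amounts to covering the sphere $2\lambda r\,S^{n-1}$ within tolerance $r$ by the dilates $ty$ of \emph{bare} net points rather than by feet of perpendiculars on segments. The threshold $\varepsilon_0=1/25$ is pinned simply by $q(\varepsilon)=\frac{1-\varepsilon}{4\sqrt{\varepsilon}}\ge q(1/25)=\frac65>1$, not by any norm upgrade. With this replacement your argument becomes a correct proof of the lemma as stated; your phase two is already fine, since the triangle--inequality shell bound uses only the bare net condition, matching the paper's Minkowski--sum computation.
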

\begin{proof}
In each of the two cases $r < 2\sqrt{\varepsilon}$ and
$2\sqrt{\varepsilon} \le r$, we argue a bit differently, so we handle
them separately. We begin with the case of smaller initial inner radius.

\noindent{\bf Case a. Increasing $r$ geometrically to reach $2\sqrt{\varepsilon}$:}

If $r < 2\sqrt{\varepsilon}$ we may take the second assumption and
write for any $u\in S^{n-1}$:
\[
r\frac{1-\varepsilon}{2\sqrt{\varepsilon}} D_n
\subseteq
\frac{r}{2\sqrt{\varepsilon}}K + r D_n
\subset
K + r D_n
\subseteq
K + R_u (K),
\]
So that $r\frac{1-\varepsilon}{4\sqrt{\varepsilon}} D_n
\subseteq M_u (K)$. Defining the (decreasing) function $q:(0,1)\to\R^+$
by $q(\varepsilon)= \frac{1-\varepsilon} {4\sqrt{\varepsilon}}$, we
have $q(\varepsilon)\ge q(\varepsilon_0)= 6/5$, so the inner radius
multiplies by at least $6/5$. If $2\sqrt{\varepsilon} \le r(6/5)^m$,
then after $m$ symmetrizations the inner radius reaches
$2\sqrt{\varepsilon}$.
Thus after $N_a = 4 + 3 \left|\log\left( \frac{\varepsilon}{r^2} \right)\right|$
symmetrizations, we have reduced to the second case, where
$2\sqrt{\varepsilon} \le r$.

\noindent{\bf Case b. Increasing $r$ geometrically  towards $1$:}

Again, for any $u\in S^{n-1}$ we have
\[
\frac{(1-[2\sqrt{\varepsilon} + \varepsilon]) + r}{2} D_n
=
\frac{(1 - \varepsilon) D_n +
(r - 2\sqrt{\varepsilon}) D_n}{2}
\subseteq
\]
\[
\frac{K + 2\sqrt{\varepsilon}D_n +
(r - 2\sqrt{\varepsilon}) D_n}{2} =
\frac{K + r  D_n}{2} 
\subseteq
M_u (K).
\]
So the difference $(1-[2\sqrt{\varepsilon} + \varepsilon])- r$
decreases by at least half. The inner radius exceeds $1 -
4\sqrt{\varepsilon}$ if and only if its difference from 
$(1-[2\sqrt{\varepsilon} + \varepsilon])$ decreases below
$2\sqrt{\varepsilon} - \varepsilon > \sqrt{\varepsilon}$. Thus it
suffices to decrease that difference to $\sqrt{\varepsilon}$. For that
we require no more than $N_b=|\log_2(\sqrt{\varepsilon})| = 
\frac{1}{\log 4} |\log(\varepsilon)|$ symmetrizations.
\end{proof}

\begin{proof}[{\bf Proof of Theorem \ref{Thm_Mink-Isometric}.}] To
complete the proof one has to combine the steps above. Let $K\in\Sn$,
such that $M^\ast(K) = 1$. By Lemma \ref{Lem_Give-Small-Ball}, there
exist $cn$ Minkowski symmetrizations that transform $K$ into a set
$K_1$ such that
\[
\frac{c_2}{\sqrt{n}}D_n \subset K_1
\]
and no Minkowski symmetrization can change this fact. Let $0 <
\varepsilon < 1/25$. By Theorem \ref{thm-Klartag}, there exist
$cn|\log\varepsilon|$ symmetrization that transform $\conv(K_1)$ into
a convex set $L$ such that $(1-\varepsilon)D_n \subset L \subset (1 +
\varepsilon)D_n$. Since $M_u \conv K = \conv M_u K$, we may apply the
same symmetrizations to $K_1$ to obtain a new set $K_2$ such that
$\conv(K_2) = L$. By Lemma \ref{Lem_Conv=Ball-to-eps-net}, we get the
following:
\[
(1-\varepsilon)S^{n-1} \subset K_2 + 2\sqrt{\varepsilon}D_n.
\]
In addition, 
\[
\frac{c_2}{\sqrt{n}}D_n \subset K_2.
\]
Thus, by Lemma \ref{lem-StarShapedContainsBall} there exist $\alpha +
\beta|\log\varepsilon| + \gamma\log n$ Minkowski symmetrizations that
transform $K_2$ into $K_3$ such that
\[
(1-4\sqrt{\varepsilon})D_n \subset K_3.
\]
Recall that $K_3 \subset (1+\varepsilon)D_n$. To sum it up, we
applied no more than
\[
cn + cn|\log\varepsilon| + \alpha + \beta|\log\varepsilon| +
\gamma\log n \le C n |\log\varepsilon|
\]
symmetrizations, for some universal constant $C > 0$.

During the proof we assumed that $\varepsilon < 1/25$. This can be
easily changed to $\varepsilon < 1/2$, at the cost of a different
constant in the expression $C n |\log\varepsilon|$, by always
symmetrizing the set to be $\varepsilon / 25$ close to the Euclidean
ball. By the same argument one may extend for all $\varepsilon\in
(0,1)$, and the corresponding bound on the number of symmetrizations
will become $n (C|\log\varepsilon| + C')$.
\end{proof}

\noindent Dan Itzhak Florentin, danflorentin@gmail.com\\
\noindent Department of Mathematics, Weizmann Institute of Science, Rehovot, Israel.\\

\noindent Alexander Segal, segalale@gmail.com\\
\noindent School of Mathematical Science, Tel Aviv University, Tel Aviv, Israel.\\

\end{document}